\newtheorem{theorem}{Theorem}[section]
\theoremstyle{definition}
\newtheorem{definition}[theorem]{Definition}
\newtheorem{example}[theorem]{Example}
\theoremstyle{proposition}
\newtheorem{proposition}[theorem]{Proposition}
\theoremstyle{remark}
\newtheorem{remark}[theorem]{Remark}
\numberwithin{equation}{section}
\def\eps{\varepsilon}
\def\la{\lambda}
\def\La{\Lambda}
\def\w{\omega}
\def\N{\mathbb{N}}
\def\Z{\mathbb{Z}}
\def\R{\mathbb{R}}
\def\C{\mathbb{C}}
\def\T{\mathbb{T}}
\def\Q{\mathbb{Q}}
\def\cA{\mathcal{A}}
\def\cT{{\mathcal T}}
\def\RR{\mathcal{R}}
\newcommand{\peso}[1]{ \ \mbox{ \rm  #1 } \  }
\newcommand{\sub}[2]{{#1}_{\mbox{\tiny{${#2}$}}}}
\begin{document}
\title{Riesz Bases of Exponentials on Unbounded Multi-tiles}

\author{Carlos Cabrelli}
\address{ Departamento de Matem\'atica, Universidad de Buenos Aires,
Instituto de Matem\'atica "Luis Santal\'o" (IMAS-CONICET-UBA), Buenos Aires, Argentina}
\email{cabrelli@dm.uba.ar}

\author{Diana Carbajal}
\address{ Departamento de Matem\'atica, Universidad de Buenos Aires,
Instituto de Matem\'atica "Luis Santal\'o" (IMAS-CONICET-UBA), Buenos Aires, Argentina}
\email{dcarbajal@dm.uba.ar}

\subjclass[2010]{Primary 42B99, 42C15; Secondary  42A10, 42A15}

\date{}

\dedicatory{}


\footnotetext{The research of the authors is partially supported by Grants: CONICET PIP 11220110101018, PICT-2014-1480, 
UBACyT 20020130100403BA, UBACyT 20020130100422BA.} 

\begin{abstract}
	
	We prove the existence of Riesz bases of exponentials of $L^2(\Omega)$, provided that $\Omega\subset \R^d$ is a measurable set of finite and positive measure, not necessarily bounded, that satisfies a multi-tiling condition and an arithmetic property that we call {\it admissibility}. This property is satisfied for any bounded domain, so our results extend the known case of bounded multi-tiles. 
	We also extend known results for submulti-tiles and frames of exponentials to the unbounded case.
	
\end{abstract}

\maketitle

\paragraph{Keywords:}{Riesz bases of exponentials; Frames of exponentials, Multi-tiling; Sub\-multi-tiling; Paley-Wiener spaces; Shift-invariant spaces.}

\section{Introduction}

The main goal of this paper is to study the existence of Riesz basis of exponentials in $L^2(\Omega)$ 
for domains  $\Omega \subset \R^d$ of finite and positive measure, not necessarily bounded.

The existence of bases of exponentials is a very well studied problem. For orthonormal bases, the question of existence is related to the famous Fuglede's conjecture \cite{F} (also known as the spectral set conjecture).
It states that if $\Omega$ is a  domain of positive and finite measure, an orthogonal basis of exponentials $\big\{e^{2\pi i \gamma . \omega} : \gamma \in \Gamma\big\}$ for $L^2(\Omega)$ exists if and only if the set $\Omega$ tiles $\R^d$ by translations along some discrete set $\Lambda$. This latter means that
\begin{equation*}
\sum_{\la\in\La} \chi_{\Omega}(\w+\la) = 1, \;\;\;\text{ a.e.} \;\omega \in \R^d.
\end{equation*}

Fuglede's conjecture is false in dimensions greater or equal than $3$, and it is open for $d=1,2$. (See \cite{T, KM, FMM, FR}). However, it has been proved for a great number of special cases. For example, it is always true for lattices \cite{F}, (see also \cite{Ios}). That is, if $H$  is a 
full lattice in $\R^d$, the system
$\big\{ e^{2\pi i h. \omega} : h \in H\big\}$ is an orthogonal basis of $L^2(\Omega)$ if and only if
$\Omega$ tiles $\R^d$ with translations by $\Lambda$, the dual lattice of $H$.  It is also true for convex bodies, \cite{IKT1}.

On the other hand, it has also been proved that there are sets $\Omega$ that do not possess an orthonormal basis of exponentials, as it is the case, for example, of the unit ball of $\R^d$ when $d > 1$ and the case of non-symmetric convex bodies \cite{Ko1}.
Since orthogonality impose a very severe restriction, it is natural to look at Riesz bases instead. 

The system $\{e^{2\pi i \gamma . \omega} : \gamma \in \Gamma\}$ is a {\bf Riesz basis} of $L^2(\Omega)$  if it is complete and satisfies
that 
\begin{equation*}\label{BaseRiesz}
A \sum_{\gamma \in \Gamma} |\,c_{\gamma}\,|^2 \leq \big\| \sum_{\gamma \in \Gamma} c_{\gamma} e^{2\pi i \gamma . \omega} \big\|^2  \leq B\sum_{\gamma \in \Gamma} |\,c_{\gamma}\,|^2, \quad \forall \, \{c_{\gamma}\} \in \ell^2(\Gamma),
\end{equation*}
for some  positive constants  $A,B>0.$ 

The more general problem of the existence of Riesz bases of exponentials is of different nature and bring new challenges. Again the relevant question here is which domains $\Omega$ admit a Riesz basis of exponentials, and which discrete sets $\Gamma$ give rise to Riesz basis of exponentials for some domain. There are few cases of sets where it was possible to prove the existence of such bases. However, as far as we know, there is no example of 
a set $\Omega$ of finite measure, (even in the line), that do not support a basis of this type.

One of the 
reasons that make the problem significant and relevant is that the existence of a Riesz  basis of exponentials for a set $\Omega$ is equivalent to the existence of a set of stable sampling and interpolation for the associated Paley-Wiener space $PW_{\Omega}$
(see, for example, \cite{Y,S}).

Recently, G. Kozma and S. Nitzan made  a significative advance for this problem. They proved
that any finite union of rectangles in $\R^d$ admits a Riesz basis of exponentials, \cite{KN, KNd}.

Morover, S. Grepstad and N. Lev \cite{GeL}, discovered that bounded measurable sets $\Omega\subset \R^d$ that satisfy a multi-tiling condition,
support a Riesz basis of exponentials. The proof use the theory of quasi-crystals developed in \cite{MM1,MM2}, and require the condition that  the boundary of the domain $\Omega$ has Lebesgue measure zero.
Later on, Kolountzakis \cite{Ko} found a much simpler proof and was able to remove the zero measure boundary condition.
More precisely, they proved that if a bounded measurable set $k$-tiles $\R^d$ by translations on a lattice $\Lambda$ (see Definition \ref{multi-tiles-def}), then there exist
vectors $a_1,\dots,a_k \in \R^d$ such that $E(H, a_1,\dots,a_k)$ is a Riesz basis of $L^2(\Omega).$
Here, 
\begin{equation}\label{special}
E(H\,;\, a_1,\dots,a_k):= \{e^{2\pi i (a_j + \,h) . \omega}\, :\, h\in H,\, j=1,\;\dots,k\}, \end{equation}
where $H$ is the dual lattice of $\Lambda.$
That is, bounded multi-tile sets with respect to a lattice, always  support a basis of exponentials with the set of frequencies being a finite union of translations on the dual lattice.

This result was extended in \cite{AAC} to locally compact abelian groups.  They used fiberization techniques from the theory of shift-invariant spaces, \cite{CP}. They also proved, in this general setting, a converse of this result. That is, if a set $\Omega \subset \R^d$ is such that there exist
a lattice $H$ and vectors $a_1,\dots,a_k \in \R^d$ with $E(H, a_1,\dots,a_k)$ a Riesz basis of $L^2(\Omega),$ then $\Omega$ must multi-tile $\R^d$ at level $k$ for $\Lambda,$ the dual lattice of $H.$
This can be seen as an extension of Fuglede's Theorem for lattices, for the case of multi-tiles and Riesz bases.

A natural question raised by Kolountzakis in \cite{Ko} was if this result was still valid for {\it unbounded} multi-tile sets of finite measure.
In \cite{AAC} the authors answered this question by the negative. They constructed a counterexample of an unbounded multi-tile set of level $2$ in the line, that does not possess a Riesz basis of exponentials with the special structure  \eqref{special}.

In this paper, we prove that {\it unbounded} multi-tile sets of $\R^d$ of finite measure do support a Riesz basis of exponentials
if they satisfy an extra arithmetic condition, that we call {\it admissibility} (see \ref{cP} for a precise definition).
Our main result is:
\begin{theorem}\label{RB for k-tiles}
	Let $\Omega\subset\R^d$ be a measurable set such that $0 < |\Omega|  < +\infty$ and $\La\subset\R^d$ a full lattice.  If
	\begin{enumerate}
		\item[(i)]$\Omega$ multi-tiles $\R^d$ at level $k$ by translations on $\La$,
		\item[(ii)] $\Omega$ is admissible for $\La$,
	\end{enumerate}
	then, there exist $a_1,\dots, a_k \in \R^d$ such that the set $E(H\,;\,a_1,\dots,a_k)$
	is a Riesz basis of $L^2(\Omega)$.
\end{theorem}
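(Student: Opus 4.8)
The plan is to fiberize the problem: pass, via a measurable decomposition of the multi-tile, to an equivalent vector-valued reformulation in which the Riesz-basis property becomes a uniform invertibility condition on a family of $k\times k$ matrices of exponentials, then use admissibility to reduce that family to finitely many ``shapes'', and finally make a generic choice of $a_1,\dots,a_k$. After an invertible linear change of variables we may assume $\La=H=\Z^d$ and fix the fundamental domain $D=[0,1)^d$. Since $\Omega$ $k$-tiles $\R^d$ by $\Z^d$, the set-valued map $x\mapsto N(x):=\{\lambda\in\Z^d:\ x+\lambda\in\Omega\}$ satisfies $|N(x)|=k$ for a.e. $x\in D$ and is measurable into the countable set $\binom{\Z^d}{k}$. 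Fixing an enumeration of $\Z^d$ and, at each $x$, ordering $N(x)$ appropriately, we obtain measurable functions $\psi_1,\dots,\psi_k\colon D\to\Z^d$ enumerating $N(x)$; then $\Omega=\bigsqcup_{i=1}^k\Omega_i$ (mod null sets) with $\Omega_i:=\{x+\psi_i(x):x\in D\}$, and each $\Omega_i$ is a fundamental domain for $\Z^d$. Consequently $U\colon L^2(\Omega)\to L^2(D,\C^k)$, $(Uf)(x)=\big(f(x+\psi_1(x)),\dots,f(x+\psi_k(x))\big)$, is unitary.

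Next I would transport the exponential system through $U$. Using $h\cdot\psi_i(x)\in\Z$ for $h\in\Z^d$, one checks that $e^{2\pi i(a_j+h)\cdot\omega}\big|_\Omega$ becomes the map $x\mapsto e^{2\pi i(a_j+h)\cdot x}\,v_j(x)$, where $v_j(x):=\big(e^{2\pi i a_j\cdot\psi_1(x)},\dots,e^{2\pi i a_j\cdot\psi_k(x)}\big)\in\C^k$. For each fixed $j$ the system $\{e^{2\pi i(a_j+h)\cdot x}\}_{h\in\Z^d}$ is an orthonormal basis of $L^2(D)$, so applying the scalar Fourier transform in $h$ shows that $E(H\,;\,a_1,\dots,a_k)$ is a Riesz basis of $L^2(\Omega)$ if and only if there are $0<A\le B<\infty$ with
\[
A\,|w|^2\ \le\ \sum_{j=1}^k|\langle w,v_j(x)\rangle|^2\ \le\ B\,|w|^2\qquad\text{for a.e. }x\in D,\ \forall w\in\C^k,
\]
i.e. iff the matrix $V(x):=\big[e^{2\pi i a_j\cdot\psi_i(x)}\big]_{i,j=1}^k$ is uniformly invertible, $\sup_x\|V(x)\|<\infty$ and $\sup_x\|V(x)^{-1}\|<\infty$. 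The upper bound $\|V(x)\|\le k$ is automatic (unimodular entries), and uniform invertibility also forces completeness, so the whole matter reduces to finding $a_1,\dots,a_k\in\R^d$ with $\operatorname*{ess\,inf}_{x\in D}|\det V(x)|>0$ (then $\|V(x)^{-1}\|\lesssim_k 1/|\det V(x)|$ finishes it).

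To produce such $a_j$, factor the scalar $e^{2\pi i a_j\cdot\psi_1(x)}$ out of the $j$-th column: $|\det V(x)|=|\det\widetilde V(x)|$, where $\widetilde V(x):=\big[e^{2\pi i a_j\cdot(\psi_i(x)-\psi_1(x))}\big]_{i,j}$ depends on $x$ only through the ordered ``shape'' $\big(\psi_1(x)-\psi_1(x),\dots,\psi_k(x)-\psi_1(x)\big)$, a $k$-tuple of distinct vectors in $(\Omega-\Omega)\cap\Z^d$. This is exactly where hypothesis (ii) enters: admissibility — which holds automatically for bounded $\Omega$, since then $\Omega-\Omega$ is bounded — guarantees (after, if needed, a more careful measurable choice of the enumeration $\psi_i$, i.e. of the ``base point'' $\psi_1(x)\in N(x)$) that only finitely many shapes occur. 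For each fixed $k$-tuple of distinct lattice points $\mu_1,\dots,\mu_k$, the function $(a_1,\dots,a_k)\mapsto\det\big[e^{2\pi i a_j\cdot\mu_i}\big]_{i,j}$ is real-analytic and not identically zero (the characters $a\mapsto e^{2\pi i a\cdot\mu_i}$ are linearly independent, so the $\C^k$-valued map $a\mapsto(e^{2\pi i a\cdot\mu_i})_i$ has full-rank image, and suitable $a_j$ make the matrix invertible), hence its zero set is Lebesgue-null in $\R^{dk}$. Taking the union over the finitely many admissible shapes, we may choose $a=(a_1,\dots,a_k)$ outside all of these zero sets; since finitely many nonzero numbers are involved, $\operatorname*{ess\,inf}_{x\in D}|\det V(x)|=\min_{\text{shapes}}|\det\widetilde V|>0$. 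Reversing Steps~2 and~1 gives the Riesz basis.

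The main obstacle, and the entire point of the theorem, is Step~3 — namely isolating precisely the arithmetic hypothesis under which the finite-shape reduction survives the passage to unbounded sets. For bounded domains this is free; in general it is not, and by the counterexample of \cite{AAC} it genuinely fails without an extra condition: there exist unbounded multi-tiles for which \emph{no} choice of $a_1,\dots,a_k$ keeps $\operatorname*{ess\,inf}_x|\det V(x)|$ positive, because $(\Omega-\Omega)\cap\La$ is infinite and the relevant determinants (trigonometric polynomials evaluated along an infinite set of differences) are forced arbitrarily close to $0$. By comparison, the measurable-selection bookkeeping in Step~1, the fiberization identity in Step~2, and the ``not identically zero'' claim in Step~3 are routine.
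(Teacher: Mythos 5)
Your Steps 1--2 (the fiberization and the reduction to uniform invertibility of the $k\times k$ matrices $V(x)=\big[e^{2\pi i a_j\cdot\psi_i(x)}\big]$) are correct and coincide with the paper's route via Theorem \ref{ShiftBFR0} and the factorization $T_\w=E_\w U_\w$. The genuine gap is in Step 3: the assertion that admissibility ``guarantees that only finitely many shapes occur'' is false, and no choice of base point $\psi_1(x)$ repairs it. Example \ref{P-adm} of the paper, $\Omega=[0,1)\cup\bigcup_{j\ge1}(I_j+2j+1)$, is $(2,1)$-admissible for $\Z$, yet $\La_\w=\{0,2j+1\}$ for $\w\in I_j$, so the difference sets $\{\psi_i(x)-\psi_1(x)\}$ range over the infinite family $\{0,\pm(2j+1)\}_{j\ge1}$. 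Admissibility does not constrain the geometry of $\La_\w$ at all; it only constrains the residues of $\langle v,\la\rangle$ modulo $n$. Consequently your generic (``outside a null set'') choice of $(a_1,\dots,a_k)$ does not close the argument: with infinitely many shapes each determinant can be nonzero while $\operatorname*{ess\,inf}_x|\det V(x)|=0$, which is exactly the failure mode of the non-admissible counterexample, not a phenomenon that admissibility rules out for a generic $a$.

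The step that is actually needed — and is the content of the paper's proof — is a \emph{specific}, arithmetic choice of the $a_j$ adapted to the admissibility data $(n,v)$: take $a_j=\tfrac{j-1}{n}v$. Then the entry $e^{2\pi i a_j\cdot\la_l}=e^{2\pi i (j-1)\langle v,\la_l\rangle/n}$ depends on $\la_l$ only through $\langle v,\la_l\rangle \ (\mathrm{mod}\ n)$, so the infinitely many shapes collapse to finitely many matrices, each one a $k\times k$ submatrix of the Fourier matrix $\mathcal F_n$ obtained from $k$ consecutive columns and $k$ rows indexed by distinct residues; such a submatrix is Vandermonde with distinct nodes among the $n$-th roots of unity, hence invertible, and finiteness of the family gives the uniform bounds. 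Your ``not identically zero'' observation is correct but is applied to the wrong (infinite) family; replacing the generic selection by this explicit one is what makes the proof go through.
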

In the last section we apply our results to obtain relationships between submulti-tiles (see Definition \ref{k-subtile}) and frames of exponentials.


The paper is organized as it follows: In Section \ref{Pre} we set the notation and introduce the definition of admissibility.
We also review the results from the theory of shift-invariant spaces that we will need later. Section \ref{Mul} is devoted to the proofs of our results on multi-tiles and the existence of Riesz bases of exponentials. Finally, in Section \ref{Sub} we explore the relation between submulti-tiles and frames of exponentials.

\section{Preliminaries}\label{Pre}

Let $\La\subset\R^d$ be a {\bf full lattice}, this means that there is a $d\times d$ invertible matrix $M$ such that $\La=M\Z^d$. Recall that the {\bf fundamental domain} with respect the lattice $\La$ is the set $D=M\T^d$, which is a set of representatives of the quotient $\R^d/\La$.

Let $H\subset\R^d$ be the {\bf dual lattice} of $\La$, this is the set
\begin{equation*}
H=\{\,h\in\R^d\,:\,\langle h,\la\rangle \in\Z,\text{ for all }\la\in\La\}.
\end{equation*}
It is easy to see that $H=(M^t)^{-1}\Z^d$.

From now on, when working with a lattice $\La$, we will always denote by $D$ its fundamental domain and by $H$ its dual lattice.
For notational simplicity, we will denote by $e_{\alpha}$ the function $e_{\alpha}(\omega) =  e^{2\pi i \alpha\cdot\omega},\; \alpha, \omega \in \R^d,$ and $\#A$ will be the cardinal of the set $A$.

We will also need the following definition.
\begin{definition}\label{struc}
	We will say that a system of exponentials is {\bf structured} if it is of the form $E(H,a_1,\dots,a_k)$ as in \eqref{special} with $H \subset \R^d$ a lattice and $a_1,\dots,a_k \in \R^d$ .
\end{definition}

\subsection{Multi-tiles}

Hereafter, given a set $\Omega\subset\R^d$ and a lattice $\La\subset\R^d$, for every $\w\in D$ we will denote $\La_\w(\Omega)=\La_\w:=\{\,\la\in\La\,:\,\w+\la\in\Omega\,\}$. 

\begin{remark}\label{La_w}
	Observe that if $\Omega\subset\R^d$ is a measurable set of finite measure, then $\La_\w$ must be finite for almost every $\w\in D$. This is because
	\begin{equation*}
	\int_D \sum_{\la\in\La} \chi_\Omega(\w+\la)\,d\w = \int_{\R^d} \chi_\Omega(\w)\,d\w = |\Omega|  <+
	\infty.
	\end{equation*}
\end{remark}

\begin{definition}\label{multi-tiles-def}
	Let $k$ be a positive integer. We say that a measurable set $\Omega\subset\R^d$ {\bf multi-tiles $\R^d$ at level $k$} by translations on a lattice  $\La$ (or that $\Omega$ {\bf $k$-tiles $\R^d$} ) if for almost every $\w\in D$,
	\begin{equation*}\label{k-tile}
	\sum_{\la\in \La} \chi_{\Omega} (\w+\la)=k.
	\end{equation*}
\end{definition}

Notice that if $\Omega$ is a $k$-tile by translations on $\La$, then $\# \La_\w=k$ for almost every $\w \in D$.

\subsection{Admissible sets}

In this subsection we introduce the concept of admissible sets.

\begin{definition}\label{cP}
	Let $\Omega \subset \R^d$  be a finite measure set and $\Lambda$ a full lattice in $\R^d$. We will say that  $\Omega$ is {\bf {admissible for $\Lambda$}}  if there exist a vector $v \in H$ and a number $n\in\N,$
	such that for almost every $\omega \in D$, the numbers $\{\langle v,\lambda\rangle\,:\, \lambda \in \Lambda_{\omega}\}$ are distinct elements $(\text{mod }n)$.
	We will also say in that case that  $\Omega$ is $(n,v)$-admissible for $\Lambda$, if we want to emphasize the dependance on $n$ and $v$.
\end{definition}

When $d=1$ and $\La=\Z$, this is equivalent to say that for almost every $\w\in D$, the elements of $\La_\w\subset\Z$ are all distinct $(\text{mod }n)$.

A graphical way to describe admissibility is the following:  Let $\Omega$ be  admissible
with respect to  $\Lambda$ for some $n\in\N$ and some vector $v \in H$.  Assume that we pick a different color for each of the elements of $\Z_n$,
and we colored $\R^d$ painting the  set  $D + \lambda$ with the color assigned to the remainder $(\text{mod }n)$ of $\langle v,\la\rangle.$
Then  the admissibility says  that for almost all $\omega\in D$  the elements of the form $\omega + \lambda,$ with $\lambda \in \Lambda,$ that belongs to $\Omega$  have different colors!

\begin{remark}
	Every bounded set $\Omega\subset\R^d$ is admissible. This is because in this case, the set $\cup_{\w\in D}\, \La_\w$ must be finite, so for any $v \in H$, one can just choose a number $n\in\N$ large enough for which all the numbers of $\{\langle v,\lambda\rangle\,:\, \lambda \in \Lambda_{\omega}\}$ are all distinct $(\text{mod }n)$. 
\end{remark}

The following example shows that there exist multi-tiles that are not admissible.

\begin{example}\label{not P-adm}
	Consider the partition of $[0,1)$ in intervals $I_j:=\big[\frac{2^j-2}{2^j},\frac{2^j-1}{2^j}\big)$, $j\geq 1$.
	The set
	$$
	\Omega=[0,1)\cup \bigcup_{j=1}^\infty (I_j+j)
	$$
	is an unbounded subset of $\R$ that $2$-tiles by translations on $\Z$ and which is not admissible for $\Z$ (See figure \ref{2-tile}). In order to see that the admissibility fails, note that if $n$ is any fixed natural number and $\w\in I_n$ then $\La_\w=\{0,n\}$, which are not distinct $(\text{mod }n)$. This example is also interesting as this set does not admit a {\it structured} Riesz basis of exponentials for any lattice, see \cite{AAC} for more details.
	
	\begin{figure}[!hp]
		\centering
		\includegraphics[width=\textwidth]{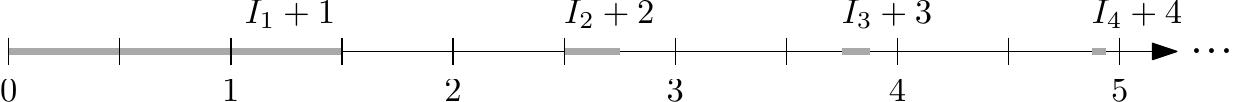}
		\par\vspace{0cm}
		\caption{The set $\Omega$.}
		\label{2-tile}
	\end{figure}
\end{example}

On the other hand, unbounded admissible multi-tiles do exist:
\begin{example}\label{P-adm}
	if in the previous example one translate the intervals $I_j$ only by odd numbers, then
	\begin{equation*}
	\Omega=[0,1)\cup \bigcup_{j=1}^\infty (I_j+2j+1),
	\end{equation*}
	is a $2$-tile unbounded set of $\R$ by translations on $\Z$ that is admissible for $\Z$ taking $n=2$.
\end{example}

\subsection{Shift-invariant spaces}

For the proof of our results we will need to recall some facts from the theory of shift-invariant spaces.
The reader is referred to \cite{B} and \cite{CP}, where a more complete treatment of the general case of shift-invariant spaces in locally compact abelian groups can be found. 

\begin{definition}
	We say that a closed subspace $V\subset L^2(\R^d)$ is {\bf $H$-invariant} if
	$$
	f\in V \peso{then} \tau_h f\in V, \,\,\, \forall h \in H,
	$$
	where $\tau_hf(x)= f(x-h)$.
\end{definition}

{\bf Paley-Wiener} spaces are a family of shift-invariant spaces in which we are especially interested. These spaces are defined by
\begin{equation*}
PW_\Omega = \{\,f\in L^2(\R^d)\,:\,\widehat{f}\in L^2(\Omega)\,\},
\end{equation*}
where $\Omega\subset\R^d$ is a measurable set of finite measure. It is easy to see that, in fact, they are invariant by any translation.

An essential tool in the development of shift-invariant theory is the technique known as {\bf fiberization} that we will introduce now.

\begin{proposition}
	The map $\cT: L^2(\R^d) \to L^2(D,\ell^2(\La))$ defined by
	$$
	\cT f(\omega) = \{\hat{f} (\omega+\la) \}_{\la \in \La},
	$$
	is an isometric isomorphism.
\end{proposition}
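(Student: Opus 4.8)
The plan is to factor $\cT$ through the Fourier transform. Write $\cT=\mathcal{U}\circ\cF$, where $\cF\colon L^2(\R^d)\to L^2(\R^d)$ is the Fourier transform $f\mapsto\widehat f$ and $\mathcal{U}\colon L^2(\R^d)\to L^2(D,\ell^2(\La))$ is the map $g\mapsto\{g(\cdot+\la)\}_{\la\in\La}$. Since $\cF$ is a linear isometric isomorphism by Plancherel's theorem, it suffices to prove the same for $\mathcal{U}$.

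First I would verify that $\mathcal{U}$ is a well-defined isometry. Because $D$ is a fundamental domain for $\La$, the translates $\{D+\la\}_{\la\in\La}$ partition $\R^d$ up to a null set, so for every $g\in L^2(\R^d)$,
$$\|g\|_{L^2(\R^d)}^2=\sum_{\la\in\La}\int_{D+\la}|g(x)|^2\,dx=\sum_{\la\in\La}\int_D|g(\w+\la)|^2\,d\w .$$
All terms being non-negative, Tonelli's theorem allows interchanging the sum and the integral, giving $\int_D\sum_{\la\in\La}|g(\w+\la)|^2\,d\w=\|\mathcal{U}g\|_{L^2(D,\ell^2(\La))}^2$. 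In particular the fibers $\{g(\w+\la)\}_{\la\in\La}$ lie in $\ell^2(\La)$ for a.e.\ $\w\in D$, so $\mathcal{U}g$ is a genuine element of $L^2(D,\ell^2(\La))$, and $\|\mathcal{U}g\|=\|g\|$; linearity is immediate.

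Next I would check surjectivity. Given $\Phi\in L^2(D,\ell^2(\La))$, using the canonical identification $L^2(D,\ell^2(\La))\cong\ell^2\bigl(\La;L^2(D)\bigr)$ (Fubini--Tonelli for counting measure on $\La$ times Lebesgue measure on $D$) we may write $\Phi(\w)=\{c_\la(\w)\}_{\la\in\La}$ with each $c_\la$ measurable on $D$ and $\sum_{\la\in\La}\int_D|c_\la(\w)|^2\,d\w<\infty$. Define $g\colon\R^d\to\C$ by $g(\w+\la):=c_\la(\w)$ for $\w\in D$, $\la\in\La$; this is unambiguous almost everywhere since the pieces $D+\la$ are measurable, disjoint, and cover $\R^d$, so $g$ is measurable, and the identity above yields $\|g\|_{L^2(\R^d)}^2=\sum_{\la}\int_D|c_\la|^2<\infty$. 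Hence $g\in L^2(\R^d)$ and $\mathcal{U}g=\Phi$; taking $f=\cF^{-1}g$ gives $\cT f=\Phi$. A surjective linear isometry is an isometric isomorphism, which finishes the proof.

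There is no real obstacle here: the argument is bookkeeping with the tiling property of $D$ plus Tonelli's theorem. The only point that deserves a word of care is measurability — that the coordinate functions of an element of $L^2(D,\ell^2(\La))$ are measurable and that the function $g$ reassembled from them is measurable on $\R^d$ — which is exactly what the identification $L^2(D,\ell^2(\La))\cong\ell^2(\La;L^2(D))$ and the disjointness of the sets $D+\la$ provide.
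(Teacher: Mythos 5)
Your proof is correct and complete: the factorization $\cT=\mathcal{U}\circ\cF$, the tiling-plus-Tonelli computation for the isometry, and the reassembly argument for surjectivity are exactly the standard argument for this fact. The paper itself states the proposition without proof (deferring to the shift-invariant-space literature it cites, where this same periodization argument appears), and only records separately the well-definedness point that your isometry computation already handles, so there is nothing to add.
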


The evaluation of elements of $L^2(\R^d)$ could not make sense a priori, however, $\cT$ is a well defined mapping by virtue of the next remark. 

\begin{remark} 
	If $\hat{f}$ and $\hat{g}$ are equal almost everywhere, then for almost every $\omega\in D$,
	$$
	\{\hat{f} (\omega+\la) \}_{\la \in \La}= \{\hat{g} (\omega+\la) \}_{\la \in \La}.
	$$
\end{remark}

In \cite{H}, Helson proved the existence of measurable range functions of an $H$-invariant space $V\subset L^2(\R^d)$. A {\bf range function} is a mapping
\begin{align*}
J_V:D&\rightarrow\{\text{\,closed subspaces of }\ell^2(\La)\,\}\\
\w&\mapsto J_V(\w),
\end{align*}
which has the property that $f\in V$ if and only if for almost every $\w\in D$,
\begin{equation*}
\cT f(\omega)\in J_V(\w).
\end{equation*}
Furthermore, if $V=\overline{\text{span}}\{\,\tau_hf\,:\,h\in H,\,f\in\cA\,\}$ for some countable set $\cA\subset L^2(\R^d)$, then
\begin{equation*}
J_V(\w) = \overline{\text{span}}\{\,\cT f(\omega)\,:\,f\in\cA\,\}.
\end{equation*}

We say $J_V$ is measurable in the following sense: for every $v,w\in\ell^2(\La)$, the scalar function $\w\mapsto\langle P_{J_V(\w)}v,w \rangle$ is measurable, where $P_{J_V(\w)}$ is the orthogonal projection onto $J_V(\w)$. Moreover, a measurable range function of $V$ is essentially unique, i.e., if $V$ has two measurable range functions $J_V$ and $J_V'$, then $J_V=J_V'$ for almost every $\w\in D$. 

Another remarkable result regarding range functions is the characterization of frames and Riesz bases of a shift-invariant space $V$ in terms of the properties of fibers. 

\begin{theorem}
	Let $\cA\subset L^2(\R^d)$ be a countable set. Then, 
	\begin{enumerate}
		\item[(i)] the system $\{\,\tau_h f\,:\,h\in H,\,f\in\cA\,\}$ is a frame of $V$ with constants $A,B>0$ if and only if  $\{\,\cT f(\omega)\,:\,f\in\cA\,\}\subset\ell^2(\La)$ is a frame of $J_V(\w)$ with constants $A,B>0$ for almost every $\w\in D$.
		\item[(ii)] the system $\{\,\tau_h f\,:\,h\in H,\,f\in\cA\,\}$ is a Riesz basis of $V$ with constants $A,B>0$ if and only if  $\{\,\cT f(\omega)\,:\,f\in\cA\,\}\subset\ell^2(\La)$ is a Riesz basis of $J_V(\w)$ with constants $A,B>0$ for almost every $\w\in D$.
	\end{enumerate}
\end{theorem}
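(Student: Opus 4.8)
\emph{Strategy.} The entire argument is best carried out on the fiber side through the isometry $\cT$, the decisive point being that translation by a dual-lattice vector becomes, after fiberization, multiplication by a character. First I would record the intertwining relation
\begin{equation*}
\cT(\tau_h f)(\w) = e_{-h}(\w)\,\cT f(\w),\qquad h\in H,
\end{equation*}
which follows at once from $\widehat{\tau_h f}=e_{-h}\,\hat f$ together with $\langle h,\la\rangle\in\Z$ for $h\in H$, $\la\in\La$, so that the $\la$-dependent phase $e^{-2\pi i h\cdot\la}$ disappears and only the scalar $e_{-h}(\w)$ survives. Since $\cT$ is a surjective isometry and, with the Lebesgue measure normalization used throughout, $\{e_h:h\in H\}$ is an orthonormal basis of $L^2(D)$, this relation is exactly the mechanism that converts sums over $H$ into integrals over $D$ via Parseval. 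I would use it to treat both parts in parallel, producing an \emph{analysis identity} for (i) and a \emph{synthesis identity} for (ii).

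\emph{The two identities.} For (i), fix $g\in V$ and set $F_{g,f}(\w):=\langle\cT g(\w),\cT f(\w)\rangle_{\ell^2(\La)}$, which lies in $L^1(D)$. Using that $\cT$ is an isometry and the intertwining relation, $\langle g,\tau_h f\rangle=\int_D F_{g,f}(\w)\,e_h(\w)\,d\w$, so $\{\langle g,\tau_h f\rangle\}_{h\in H}$ are the Fourier coefficients of $F_{g,f}$. Parseval gives $\sum_{h\in H}|\langle g,\tau_h f\rangle|^2=\|F_{g,f}\|_{L^2(D)}^2$ in $[0,+\infty]$, and summing over $f\in\cA$ and interchanging sum and integral (Tonelli) yields
\begin{equation*}
\sum_{f\in\cA}\sum_{h\in H}|\langle g,\tau_h f\rangle|^2 = \int_D \sum_{f\in\cA}\big|\langle\cT g(\w),\cT f(\w)\rangle\big|^2\,d\w .
\end{equation*}
As $\|g\|^2=\int_D\|\cT g(\w)\|^2\,d\w$ and $\cT g(\w)\in J_V(\w)$ a.e., the \emph{if} direction of (i) follows by integrating the pointwise fiber-frame inequality applied to $v=\cT g(\w)$. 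For (ii), given finitely supported $\{c_{f,h}\}$, the intertwining relation gives $\cT\big(\sum_{f,h}c_{f,h}\tau_h f\big)(\w)=\sum_f m_f(\w)\cT f(\w)$ with $m_f(\w):=\sum_h c_{f,h}e_{-h}(\w)$ and $\|m_f\|_{L^2(D)}^2=\sum_h|c_{f,h}|^2$, whence
\begin{equation*}
\Big\|\sum_{f,h}c_{f,h}\tau_h f\Big\|^2=\int_D\Big\|\sum_{f}m_f(\w)\cT f(\w)\Big\|_{\ell^2(\La)}^2 d\w,\qquad \sum_{f,h}|c_{f,h}|^2=\int_D\sum_f|m_f(\w)|^2\,d\w,
\end{equation*}
and integrating the pointwise fiber Riesz-basis inequality with $a_f=m_f(\w)$ gives the \emph{if} direction of (ii). Completeness on the two sides matches because $J_V(\w)=\overline{\text{span}}\{\cT f(\w):f\in\cA\}$.

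\emph{The converse directions and the main obstacle.} The \emph{only if} directions are where the real work lies: from an inequality valid for \emph{every} $g\in V$ (resp. every coefficient sequence) I must recover the \emph{pointwise a.e.} fiber inequality, and the device for this is localization. If, say, the lower fiber-frame bound failed on a set $E\subset D$ of positive measure, then for $\w\in E$ there would be a unit $v_\w\in J_V(\w)$ with $\sum_f|\langle v_\w,\cT f(\w)\rangle|^2<A$; choosing $\w\mapsto v_\w$ \emph{measurably} and restricting to a subset $E'\subset E$ of positive finite measure, the field $\phi:=v_\w\,\chi_{E'}$ lies in $L^2(D,\ell^2(\La))$ with $\phi(\w)\in J_V(\w)$ a.e., so $\phi=\cT g$ for some $g\in V$, and the analysis identity then forces $\sum_{f,h}|\langle g,\tau_h f\rangle|^2<A\|g\|^2$, contradicting the global bound. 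The upper bound, and the Riesz-basis statement (localizing instead the coefficient field $\{a_f^\w\}$ and taking $c_{f,h}$ as the Fourier coefficients of $a_f^\w\,\chi_{E'}$), go through identically. The genuinely delicate point, which I expect to be the main obstacle, is the \emph{measurable selection} of the defect field $\w\mapsto v_\w$ (resp. $\w\mapsto\{a_f^\w\}$): this rests on the measurability of the range function $J_V$ and the separability of $\ell^2(\La)$, which together let one realize the essential infimum of the fiber defect along a fixed countable family of measurable sections and select a near-extremal unit vector measurably. Once that selection is in hand the contradiction arguments are routine, and combining them with the \emph{if} directions closes both (i) and (ii).
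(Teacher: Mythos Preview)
The paper does not supply a proof of this theorem: it is stated in the Preliminaries as background, with the reader referred to \cite{B} and \cite{CP} for the general theory of shift-invariant spaces. So there is no ``paper's own proof'' to compare against; what I can do is assess your argument on its merits and against the standard proofs in those references.

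Your strategy is exactly the classical one (Bownik, Cabrelli--Paternostro): exploit the intertwining relation $\cT(\tau_h f)(\w)=e_{-h}(\w)\,\cT f(\w)$ to turn the $H$-sum into a Fourier expansion on $D$, obtain the analysis identity for frames and the synthesis identity for Riesz sequences, integrate the fiberwise inequalities for the \emph{if} directions, and localize via measurable selection for the \emph{only if} directions. The logic is sound, and you have correctly isolated the one genuinely nontrivial step, namely the measurable choice of a defect vector $\w\mapsto v_\w\in J_V(\w)$ (respectively a defect coefficient field $\w\mapsto\{a_f^\w\}$), which relies on the measurability of $J_V$ and the separability of $\ell^2(\La)$.

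Two small points worth tightening. First, in this paper's normalization $\{|D|^{-1/2}e_h:h\in H\}$ is the orthonormal basis of $L^2(D)$ (see the computation around \eqref{norma m}), not $\{e_h\}$ as you wrote; this introduces a harmless factor of $|D|$ in your Parseval steps, and once tracked consistently the frame/Riesz constants still match on the two sides. Second, for part (ii) you invoke $J_V(\w)=\overline{\text{span}}\{\cT f(\w):f\in\cA\}$ to pass completeness back and forth; strictly speaking this identity is stated in the paper under the hypothesis that $V$ is generated by the $H$-translates of $\cA$, which in the Riesz-basis setting is part of what is being asserted, so in the \emph{only if} direction you should note that completeness of the global system forces $V$ to equal that closed span, after which Helson's description of $J_V$ applies. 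With those cosmetic fixes your proof is complete and coincides with the standard one.
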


In particular, when $\cA\subset L^2(\R^d)$ is a finite set, this allows us to translate problems in infinite dimensional $H$-invariant spaces, into problems of finite dimension that can be treated with linear algebra.

When working with the shift-invariant space $V=PW_\Omega$, we denote its range function as $J_\Omega$. Considering Remark \ref{La_w}, we are able to characterize $J_\Omega$ as it follows.

\begin{proposition}\label{RangeFunc}
	Let $\Omega\subset\R^d$ be a measurable set of finite measure. Then for almost every $\w\in D$ we have
	\begin{equation*}
	J_\Omega(\w)\simeq\ell^2(\La_\w).
	\end{equation*}
\end{proposition}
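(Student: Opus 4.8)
The plan is to identify the range function of $PW_\Omega$ explicitly and then invoke the essential uniqueness of measurable range functions stated above. For $\w\in D$, let
\[
W(\w):=\{\,x\in\ell^2(\La)\,:\,x_\la=0\ \text{for every}\ \la\in\La\setminus\La_\w\,\},
\]
a closed subspace of $\ell^2(\La)$; by Remark \ref{La_w} it is finite dimensional for almost every $\w$, and in any case the coordinate-restriction map is a canonical isometric isomorphism $W(\w)\to\ell^2(\La_\w)$. I will check that $W$ is a \emph{measurable} range function for $PW_\Omega$. Since Helson's result provides a measurable range function $J_\Omega$ for $PW_\Omega$ and such a function is essentially unique, it follows that $J_\Omega(\w)=W(\w)\simeq\ell^2(\La_\w)$ for almost every $\w\in D$, which is the assertion.

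For measurability, note that the orthogonal projection onto $W(\w)$ is truncation of coordinates to $\La_\w$, so for fixed $v,w\in\ell^2(\La)$,
\[
\langle P_{W(\w)}v,w\rangle=\sum_{\la\in\La_\w}v_\la\,\overline{w_\la}=\sum_{\la\in\La}\chi_\Omega(\w+\la)\,v_\la\,\overline{w_\la}.
\]
Each summand is a measurable function of $\w$, and the series converges uniformly in $\w$ since it is dominated by $\sum_{\la\in\La}|v_\la|\,|w_\la|<\infty$; hence $\w\mapsto\langle P_{W(\w)}v,w\rangle$ is measurable.

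For the defining property of a range function, recall that $f\in PW_\Omega$ means precisely that $\widehat f$ vanishes almost everywhere off $\Omega$. Since $\{D+\la\}_{\la\in\La}$ partitions $\R^d$ up to a null set and $\cT f(\w)_\la=\widehat f(\w+\la)$, Tonelli's theorem gives
\[
\int_{\R^d}|\widehat f(\xi)|^2\,\bigl(1-\chi_\Omega(\xi)\bigr)\,d\xi=\int_D\ \sum_{\la\in\La\setminus\La_\w}\bigl|\cT f(\w)_\la\bigr|^2\,d\w .
\]
The left-hand side vanishes iff $f\in PW_\Omega$, while the right-hand side vanishes iff the inner sum equals $0$ for almost every $\w\in D$, i.e. iff $\cT f(\w)\in W(\w)$ for almost every $\w$. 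Thus $W$ is a range function for $PW_\Omega$, completing the argument.

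There is no serious obstacle here; the only points that need care are keeping track of the exceptional null sets through the Tonelli computation and the correct reading of the condition ``$\widehat f\in L^2(\Omega)$'' as a support condition on $\widehat f$. A more abstract alternative would be to produce a countable generating family $\cA$ for $PW_\Omega$ and compute $J_\Omega(\w)=\overline{\mathrm{span}}\{\cT f(\w):f\in\cA\}$ from the quoted Helson formula, but the direct uniqueness argument above is shorter and more transparent.
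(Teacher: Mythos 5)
Your proof is correct, but it follows a genuinely different route from the paper's. The paper fixes a single $\w\in D$ (outside the null set where $\La_\w$ is infinite), and for each finitely supported sequence uses Tietze's extension theorem to manufacture a bounded continuous function on $\Omega$ whose fiber at $\w$ is that sequence; this exhibits $\{a\in\ell^2(\La):\mathrm{Supp}(a)\subseteq\La_\w\}\subseteq J_\Omega(\w)$ directly, with the reverse inclusion left as an easy check. You instead take the candidate field $W(\w)$ of coordinate-truncated sequences, verify by a Tonelli computation that $f\in PW_\Omega$ if and only if $\cT f(\w)\in W(\w)$ a.e., check measurability of $\w\mapsto\langle P_{W(\w)}v,w\rangle$, and invoke the essential uniqueness of measurable range functions. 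Your argument buys two things: it treats both inclusions in one stroke, and it sidesteps a subtlety in the paper's argument, namely that for a fixed $\w$ and a fixed $f\in PW_\Omega$ the relation $\cT f(\w)\in J_\Omega(\w)$ is only guaranteed for almost every $\w$, so asserting it at the particular $\w$ one has fixed really requires a countable generating family or similar bookkeeping. What the paper's approach buys is constructive content: it shows concretely which functions of $PW_\Omega$ realize prescribed fibers, which is in the spirit of the fiberization machinery used later. Both the identification of ``$\widehat f\in L^2(\Omega)$'' as the support condition $\widehat f=0$ a.e.\ off $\Omega$ and the domination $\sum_{\la}|v_\la||w_\la|<\infty$ justifying measurability are handled correctly in your write-up.
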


\begin{proof}
	
	Let us fix $\w\in D\setminus E$ where $E\subset D$ is the zero measure set of exceptions where $\La_\w$ is not finite and define $S_\w:=\{\,a\in\ell^2(\La)\,:\,\text{Supp}(a)\subseteq \La_\w\,\}$, which is isomorphic to $\ell^2(\La_\w)$. Let $C_b(\Omega)$ be the space of bounded continuous functions defined on $\Omega$, we have that $C_b(\Omega)\subset L^2(\Omega)$.
	
	Let $\widetilde{a}\in S_\w$, then there is a sequence $a\in\ell^2(\La_\w)$ such that
	\begin{equation*}
	\widetilde{a}_\la=
	\begin{cases}
	a_\la&\text{ if }\la\in \La_\w,\\
	0&\text{ otherwise.}
	\end{cases}
	\end{equation*}
	
	By Tietze's Extension Theorem, there exists $f_a\in C_b(\Omega)$ such that $f_a(\w+\la)=a_\la$. If we define $\widetilde{f_a}$ as $f_a$ in $\Omega$ and zero in $\R^d\setminus \Omega$, then $(\widetilde{f_a})\check{\text{ }}\in PW_\Omega$, thus $\cT (\widetilde{f_a})\check{\text{ }}(\w)=\widetilde{a} \in J_\Omega(\w)$. This proves that $S_\w\subseteq J_\Omega(\w)$. It is easy to prove the other inclusion. We conclude that $J_\Omega(\w)=S_\w$.
	
\end{proof}

As a consequence, we see that $\Omega$ is a $k$-tile if and only if $J_\Omega(\w)$ are $k$ dimensional for almost every $\w\in D$. 

All these previous results lead to the following theorem whose proof can be found in \cite{AAC}.

\begin{theorem}\label{ShiftBFR0}
	Let $\Omega$ be a $k$-tile measurable subset of $\R^d$. Given $\phi_1,\ldots,\phi_k \in PW_\Omega$ we define
	$$
	T_\w=\begin{pmatrix}
	\widehat{\phi}_1(\w+\la_{1})&\ldots& \widehat{\phi}_k(\w+\la_{1})\\
	\vdots&\ddots&\vdots\\
	\widehat{\phi}_1(\w+\la_{k})&\ldots& \widehat{\phi}_k(\w+\la_{k})
	\end{pmatrix}
	$$ 
	where the $\la_j=\la_j(\w)$ for $j=1,\ldots,k$ are the $k$ values of $\La$ that 
	belong to $\La_\w$. Then, the subsequent statements are equivalent:
	
	\begin{itemize}
		\item[(i)] The set $\Phi_{H}=\{\,\tau_{h}\phi_j:\ h\in H,\ j=1,\ldots,k\,\}$ is a Riesz basis for $PW_\Omega$.
		\item[(ii)] There exist $A, B>0$ such that for almost every $\omega \in D$,
		\begin{equation}\label{ineq Tw}
		A||x||^2 \leq \|T_\w \,x\|^2\leq B||x||^2,
		\end{equation}
		for every $x\in \C^k$.
	\end{itemize}
	Moreover, in this case the constants of the Riesz basis are 
	$$
	A=\inf_{\w\in D} \ \|T_\w^{-1}\|^{-1} \peso{and} B=\sup_{\w\in D}\ \|T_\w\|.
	$$
\end{theorem}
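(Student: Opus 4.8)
The plan is to reduce, via the fiberization isomorphism $\cT$ and Helson's range function, the assertion about the infinite system $\Phi_H$ inside $PW_\Omega$ to an almost-everywhere statement about the $k$ vectors $\cT\phi_1(\w),\dots,\cT\phi_k(\w)$ in the fiber $J_\Omega(\w)$, and then to recognize that as elementary linear algebra for the matrix $T_\w$. First I would set $\cA=\{\phi_1,\dots,\phi_k\}$ and $V=\overline{\text{span}}\,\Phi_H$; since each $\phi_j\in PW_\Omega$ we have $V\subseteq PW_\Omega$. By Helson's theorem, $J_V(\w)=\overline{\text{span}}\{\cT\phi_1(\w),\dots,\cT\phi_k(\w)\}$ for a.e.\ $\w\in D$. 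Because $\widehat{\phi}_j$ is supported on $\Omega$, the sequence $\cT\phi_j(\w)=\{\widehat{\phi}_j(\w+\la)\}_{\la\in\La}$ is supported on $\La_\w$, hence lies in $S_\w$, which by Proposition \ref{RangeFunc} equals $J_\Omega(\w)$; and since $\Omega$ is a $k$-tile, $\#\La_\w=k$ and $\dim J_\Omega(\w)=k$ for a.e.\ $\w$.

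The next step is the key equivalence: $\Phi_H$ is a Riesz basis of $PW_\Omega$ with bounds $A,B$ if and only if $\{\cT\phi_j(\w)\}_{j=1}^{k}$ is a Riesz basis of $J_\Omega(\w)$ with bounds $A,B$ for a.e.\ $\w\in D$. By part (ii) of the fiberwise characterization of Riesz bases in shift-invariant spaces recalled above, $\Phi_H$ is a Riesz basis of $V$ with bounds $A,B$ iff $\{\cT\phi_j(\w)\}_j$ is a Riesz basis of $J_V(\w)$ with the same bounds for a.e.\ $\w$. If the fibers are Riesz bases of $J_\Omega(\w)$ a.e., then $J_V(\w)=\overline{\text{span}}\{\cT\phi_j(\w)\}=J_\Omega(\w)$ a.e., and by the essential uniqueness of the measurable range function this forces $V=PW_\Omega$; conversely, $\Phi_H$ being a Riesz basis of $PW_\Omega$ gives $V=PW_\Omega$, hence $J_V=J_\Omega$ a.e. I expect this to be the main point to get right: one must make sure the fiberwise property delivers a basis of all of $PW_\Omega$, and not merely of $\overline{\text{span}}\,\Phi_H$, and this is precisely where the $k$-tile hypothesis enters, since it guarantees $\dim J_\Omega(\w)=k$, so that $k$ vectors can indeed span the fiber.

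It then remains to match ``$\{\cT\phi_j(\w)\}_{j=1}^{k}$ is a Riesz basis of $J_\Omega(\w)$ with bounds uniform in $\w$'' with statement (ii). Fix $\w$ outside the exceptional null set and write $\La_\w=\{\la_1,\dots,\la_k\}$. Identifying $J_\Omega(\w)\simeq\ell^2(\La_\w)\simeq\C^{k}$ through the canonical basis $\{e_{\la_i}\}_{i=1}^{k}$, the vector $\cT\phi_j(\w)$ becomes the column $(\widehat{\phi}_j(\w+\la_1),\dots,\widehat{\phi}_j(\w+\la_k))^{t}$, i.e.\ the $j$-th column of $T_\w$, so the synthesis operator of $\{\cT\phi_j(\w)\}_{j=1}^{k}$ is $x\mapsto T_\w x$. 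Being $k$ vectors in a $k$-dimensional space, they form a Riesz basis of $J_\Omega(\w)$ precisely when $T_\w$ is invertible, with lower and upper bounds $\|T_\w^{-1}\|^{-1}$ and $\|T_\w\|$ (equivalently $\|T_\w^{-1}\|^{-2}\|x\|^2\le\|T_\w x\|^2\le\|T_\w\|^2\|x\|^2$ for all $x\in\C^k$; the lower bound alone makes $T_\w$ injective, hence invertible). Thus $\{\cT\phi_j(\w)\}$ is a Riesz basis of $J_\Omega(\w)$ with bounds uniform in $\w$ if and only if \eqref{ineq Tw} holds for a.e.\ $\w$; combined with the reduction of the previous paragraph this yields (i)$\iff$(ii), and taking the infimum of $\|T_\w^{-1}\|^{-1}$ and the supremum of $\|T_\w\|$ over $\w\in D$ produces the sharp constants of the ``moreover'' statement.
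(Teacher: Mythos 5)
Your proof is correct. It runs entirely through the abstract fiberization machinery set up in the preliminaries: Helson's range function, the fiberwise characterization of Riesz bases of shift-invariant systems, Proposition \ref{RangeFunc}, and the injectivity of the correspondence $V\mapsto J_V$ to upgrade ``Riesz basis of the closed span of $\Phi_H$'' to ``Riesz basis of $PW_\Omega$''. The paper itself defers the proof to \cite{AAC} and instead sketches a more hands-on computation: for an exponential polynomial $P=\sum_{j,h}c_{j,h}\,e_{a_j+h}$ one periodizes $\|P\|^2_{L^2(\Omega)}$ over $\La$ and uses the $k$-tile condition to rewrite it as $\int_D\|T_\w\, m(\w)\|^2\,d\w$ together with $\int_D\|m(\w)\|^2\,d\w=|D|\sum_{j,h}|c_{j,h}|^2$, which yields the Riesz-sequence equivalence directly; completeness is then quoted from \cite{AAC}. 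The two routes are the same idea in different clothing (the periodization identity is exactly the concrete form of the fiberwise theorem), but yours treats completeness more transparently: you correctly isolate that the lower bound in \eqref{ineq Tw} makes the $k$ columns of $T_\w$ independent, hence --- since $\dim J_\Omega(\w)=k$ by the $k$-tile hypothesis --- spanning, so $J_V(\w)=J_\Omega(\w)$ a.e.\ and therefore $V=PW_\Omega$. Two small remarks: at that step what you actually invoke is not the essential uniqueness of the range function of a fixed $V$ but the fact that $J_V=J_W$ a.e.\ implies $V=W$, which follows at once from the defining property $f\in V\iff \cT f(\w)\in J_V(\w)$ a.e.; and the sharp constants coming out of your displayed inequality are $\inf_\w\|T_\w^{-1}\|^{-2}$ and $\sup_\w\|T_\w\|^2$, so the unsquared formulas in the ``moreover'' clause reflect a normalization slip in the statement rather than in your argument.
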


We will now show the connection between this theorem and the problem of the existence of Riesz bases of exponentials. 

Let $\Omega\subset\R^d$ be a measurable $k$-tile by translations on a lattice $\La.$ 
We want to find $a_1,\dots,a_k\in\R^d$ such that $E(H\,;\,a_1,\dots,a_k)=\{e_{a_j +\, h} \, :\, h\in H,\, j=1,\dots,k\}$ is a Riesz basis of $L^2(\Omega)$. 

Define $\phi_1,\dots,\phi_k$ by their Fourier transform as follows:
\begin{equation}\label{phi_j}
\hat{\phi}_j:=e_{a_j}\,\sub{\chi}{\Omega},\quad j=1,\dots,k.
\end{equation}
Hence, we are looking for $a_1,\dots,a_k\in\R^d$ such that $\{\hat{\phi}_j e_h \, :\, h\in H,\,  j=1,\dots,k\}$ is a Riesz basis of $L^2(\Omega)$, which is equivalent to $\{\tau_h\phi_j \, :\, h\in H,\,  j=1,\dots,k\}$ being a Riesz basis for $PW_\Omega$.

Theorem \ref{ShiftBFR0} states that this will happen if and only if the matrices
\begin{equation}\label{Tw}
T_\w=\begin{pmatrix}
\widehat{\phi}_1(\w+\la_{1})&\ldots& \widehat{\phi}_k(\w+\la_{1})\\
\vdots&\ddots&\vdots\\
\widehat{\phi}_1(\w+\la_{k})&\ldots& \widehat{\phi}_k(\w+\la_{k})
\end{pmatrix}=\begin{pmatrix}
e_{a_1}\, (\w+\la_1)     &\ldots&       e_{a_k}\, (\w+\la_1)  \\
\vdots&\ddots&\vdots\\
e_{a_1}\, (\w+\la_k)       &\ldots&    e_{a_k}(\w+\la_k) 
\end{pmatrix}
\end{equation}
are uniformly bounded for almost every $\w\in D$.  Note that in this case, the columns of $T_{\omega}$ form a Riesz basis of $\C^k$ for almost every $\omega \in D$ with uniform bounds.

To clarify the relation between (i) and (ii) in Theorem \ref{ShiftBFR0} we sketch the proof from \cite{AAC} adapted to our setting.

The collection $\{e_{a_j +\, h} \, :\, h\in H,\, j=1,\dots,k\}$ is a Riesz sequence in $L^2(\Omega)$ if there exist positive constants $A$ and $B$ such that for any sequence of complex numbers $\{c_{j,h}\}$ with finitely many non-zero terms,
$$ A \;\sum_{j=1}^{k} \sum_{h\in H} |c_{j,h}|^2 \leq \|P\|^2_{L^2(\Omega)} \leq B\; \sum_{j=1}^{k} \sum_{h\in H} |c_{j,h}|^2,$$
where $P$ is the exponential polynomial
$$P(\w)=\sum_{j=1}^{k} \sum_{h\in H} c_{j,h} \;e_{a_j+h}(\w).$$

We see that
\begin{equation*}
\|P\|^2_{L^2(\Omega)}=\int_{\R^d}\bigg| \sum_{j=1}^{k} \sum_{h\in H} c_{j,h} \; e_{a_j+h}(\w)\bigg|^2 \chi_\Omega(\w)\, d\w = \int_{\R^d}\bigg| \sum_{j=1}^{k} m_j(\w)\; e_{a_j}(\w)\bigg|^2 \chi_\Omega(\w)\, d\w,
\end{equation*}
where 
$$m_j(\w):=\sum_{h\in H} c_{j,h} \; e_{h}(\w),\quad j=1,\dots,k.$$
By a $\La$-periodization argument, this is equal to
\begin{equation*}\label{sum todos la}
\sum_{\la\in\La} \int_D \bigg| \sum_{j=1}^{k} m_j(\w) e_{a_j}(\w+\la)\bigg|^2 \chi_\Omega(\w+\la)\, d\w. 
\end{equation*}
Since $\Omega$ $k$-tiles $\R^d$ by translations on $\La$, we have that for almost every $\w\in D$, $\La_\w=\{\la_1(\w),\dots,\la_k(\w)\}$. Therefore we get 
\begin{equation}\label{norma T}
\|P\|^2_{L^2(\Omega)}=\int_D \sum_{l=1}^k \bigg| \sum_{j=1}^{k} m_j(\w) e_{a_j}(\w+\la_l)\bigg|^2 \, d\w = \int_D \|T_\w m(\w)\|^2_{\C^k}\,d\w,
\end{equation}
where $m(\w)=(m_1(\w),\dots,m_k(\w))$ and $T_\w$ is the matrix defined before.

On the other hand, using  that $\big\{\frac{1}{\sqrt{|D|}} \;e_h\,:\,h\in H\big\}$ is an orthonormal basis of $L^2(D)$, we have
\begin{equation}\label{norma m}
\int_D \|m(\w)\|^2_{\C^k}\,d\w = \sum_{j=1}^k \int_D |m_j(\w)|^2 \,d\w = |D| \sum_{j=1}^k\sum_{h\in H} |c_{j,h}|^2.
\end{equation}

Combining \eqref{norma T} and \eqref{norma m} and using standard arguments of measure theory, one may check that $E(H\,;\,a_1,\dots,a_k)$ is a Riesz sequence of $L^2(\Omega)$ if and only if there exist $A,B>0$ such that for almost every $\w\in D$, the inequalities in \eqref{ineq Tw} hold for every $x\in\C^k$.
Actually, inequality \eqref{ineq Tw} implies the completeness in $L^2(\Omega)$ of the system $\{e_{a_j +\, h} \, :\, h\in H,\, j=1,\dots,k\},$ (see \cite{AAC}).
\bigskip

\section{Multi-tiles and Riesz bases}\label{Mul}

The proof of Theorem \ref{RB for k-tiles} is based on the techniques used in \cite{AAC}. Without the assumption that $\Omega$ is a bounded domain, we need admissibility as an extra condition. 


\begin{proof}(Of Theorem \ref{RB for k-tiles}).

	As we discussed before, by Theorem \ref{ShiftBFR0}, it suffices to find vectors $a_1,\dots,a_k\in\R^d$ for which there exist $A,B>0$ such that for almost every $\w\in D$, the inequalities in \eqref{ineq Tw} hold for every $x\in\C^k$, where $T_\w$ are the matrices \eqref{Tw}.
	
	Let us note that for every $\w\in D$, the matrix $T_\w$ can be decomposed as
	\begin{align*}&
	\begin{pmatrix}
	e_{a_1}\,(\la_1) &\ldots& e_{a_k}\, (\la_1)   \\
	\vdots&\ddots&\vdots\\
	e_{a_1}\,(\la_{k})&\ldots& e_{a_k}\, (\la_{k})
	\end{pmatrix}\begin{pmatrix}
	e_{a_1}\,(\w)   &0&\ldots&0& 0\\
	0&e_{a_2}\,(\w) &\ldots& 0&0\\
	\vdots&\vdots&\ddots&\vdots&\vdots\\
	0&0&\ldots& e_{a_{k-1}}\,( \w)&0\\
	0&0&\ldots& 0&e_{a_k}\,(\w)
	\end{pmatrix}  = E_\w\, U_\w \,, \label{short}
	\end{align*}
	where $U_\w$ is a unitary matrix. Then, in order to see the inequalities in \eqref{ineq Tw}, it is enough to prove that for almost every $\w \in D$,
	\begin{equation}\label{inequalities Ew}
	A||x||^2 \leq \|E_\w \,x\|^2\leq B||x||^2,
	\end{equation}
	for every $x\in\C^k$.
	
	Since $\Omega$ is admissible for $\La$, there exist $v\in H$ and a number $n\in\N$ such that for almost every $\w\in D$, the elements in $\{\langle v,\lambda\rangle\,:\, \lambda \in \Lambda_{\omega}\}$ are distinct $(\text{mod }n)$. 
	
	Set $\mathcal F_n = \big\{e^{2\pi i  rs / n}\big\}_{0\leq r,s\leq n-1}$ to be the Fourier matrix of order $n$.
	Any $k\times k$ submatrix of $\mathcal F_n$, formed by choosing $k$ consecutive columns and any $k$ rows, is an invertible matrix since it is a Vandermonde matrix.
	
	Now, we define $a_j:=\frac{j-1}{n}v$, $j=1,\dots,k$. We obtain that for almost every $\w\in D$, $$E_\w=\big\{e^{2\pi i (j-1)\langle v,\lambda_l\rangle / n}\big\}_{1\leq l,j\leq k}$$
	is one of those submatrices of $\mathcal F_n$ except by a permutation of its rows, and hence invertible.
	
	Moreover, there are finitely many different matrices $E_\w$ because there are finitely many $k\times k$ submatrices of $\mathcal F_n$. Thus, there exist $A,B>0$ such that the inequalities in (\ref{inequalities Ew}) hold for every $x\in\C^k$ and for almost every $\w\in D$.
\end{proof}

\begin{remark}
	The vectors $a_1,\dots,a_k$ defined in the proof of Theorem \ref{RB for k-tiles}, depend only on the vector $v\in H$ and $n\in\N$ from the admissibility condition. Hence, the same structured system of exponentials is a Riesz basis for any  $k$-tile $\Omega$ which is $(n,v)$-admissible for $\La$ .
\end{remark}

\begin{remark}
	If $n$ is a prime number, any selection of $k$ columns and $k$ rows from $\mathcal F_n$ forms an invertible matrix (see \cite{T2} and the references therein). Then, in the proof of Theorem \ref{RB for k-tiles}, if $n$ is a prime number we could also define $a_j:=\frac{s_j}{n}v$, $j=1,\dots,k$ where $s_1,\dots,s_k$ are distinct integers $(\text{mod }n)$. 
	
	In a more general setting, if $n$ is a  power of a prime number, any submatrix of $\mathcal F_n$, formed by any $k$ rows and $k$ columns satisfying that their index set $\{s_1,\dots,s_k\}$ is {\it uniformly distributed over the divisors of $n$}, is invertible (see \cite{ACM} for a definition of uniformly distributed). Thus, in the proof of Theorem \ref{RB for k-tiles}, if $n= p^l$ with $p$ prime and $l$ a positive integer, we might as well define $a_j:=\frac{s_j}{n}v$, $j=1,\dots,k$ where $\{s_1,\dots,s_k\}$ is uniformly distributed over the divisors of $n$.
\end{remark}

It is important to remark that there exist multi-tile sets that admit a {\it structured} Riesz basis of exponentials without being admissible. In the next example we will construct a multi-tile set which is not admissible for $\Z$ but admits a Riesz basis like in \eqref{special}.

\begin{example}
	Let $\{1,a_1,a_2\}$ be linearly independent numbers over $\Q$. Take the partition of $[0,1)$ as in Example \ref{not P-adm} and consider the following $2$-tile set of $\R$ by translations on $\Z$:
	\begin{equation}\label{2-tile example}
	\Omega=[0,1)\,\cup\,\bigcup_{j=1}^{\infty}\,(I_j+n_j),
	\end{equation}
	where the infinite sequence $\{n_j\}_{j\in\N}\subset\N$ will be adequately chosen to fit our purpose (See figure \ref{n_jtile}).
	
	\begin{figure}[H]
		\centering
		\includegraphics[width=\textwidth]{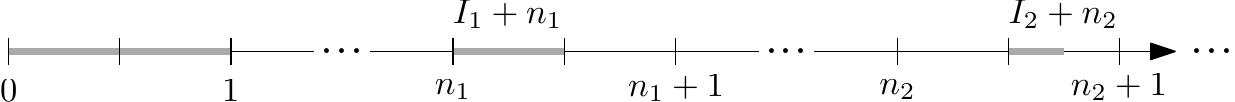}
		\par\vspace{0cm}
		\caption{The set $\Omega$.}
		\label{n_jtile}
	\end{figure} 
	
	Consider the functions $\phi_1$ and $\phi_2$ as defined in (\ref{phi_j}). Recall that in Theorem $\ref{RB for k-tiles}$ we saw that the integer translations of $\phi_1$ and $\phi_2$ form a Riesz basis for $PW_\Omega$ if and only if the matrices
	\begin{equation*}
	E_\w=\begin{pmatrix}
	e_{a_1}\,(\la_1) & e_{a_2}\, (\la_1)   \\
	e_{a_1}\,(\la_{2})& e_{a_2}\, (\la_{2})
	\end{pmatrix}
	=\begin{pmatrix}
	1 & 1   \\
	e_{a_1}\,(\la_{2})& e_{a_2}\, (\la_{2})
	\end{pmatrix}
	\end{equation*}
	satisfy that there exist $A,B>0$ such that (\ref{inequalities Ew}) hold.
	
	Let $\beta_1,\beta_2\in[0,1)$ be two distinct numbers. The matrix,
	\begin{equation*}
	R:=\begin{pmatrix}
	1 & 1   \\
	e^{2\pi i\beta_1}& e^{2\pi i\beta_2}
	\end{pmatrix}
	\end{equation*}
	
	is invertible,   and satisfies that,
	\begin{equation*}
	\gamma_{min}\; ||x||^2 \leq \|R \,x\|^2\leq \gamma_{max} \;||x||^2,\qquad  x \in \R^2,
	\end{equation*}
	where $\gamma_{min}$ and $\gamma_{max}$ are the minimum and maximum eigenvalues of $RR^*$ respectively.
	
	For every $j\in\N$, we have that $\{1,a_1j,a_2j\}$ are also linearly independent over $\Q$. By Kronecker's Approximation Theorem there exists $m_j\in\Z$ for which 
	\begin{equation*}
	\big\|\,(e^{2\pi i a_1 j m_j}\,,\,e^{2\pi i a_2 j m_j})-(e^{2\pi i\beta_1}\,,\,e^{2\pi i\beta_2})\,\big\|_2<\eps.
	\end{equation*}
	Hence, for every $j\in\N$, take $n_j= j m_j$ as the sequence needed in (\ref{2-tile example}). 
	
	Therefore, for almost every $\w\in[0,1)$, the matrices $E_\w E_\w^*$ and $RR^*$ are close to each other. Thus, the eigenvalues of these matrices must be close too. Then, for a small enough $\varepsilon$ we get uniform bounds for (\ref{inequalities Ew}) and consequently $E(\Z\,;\,a_1,a_2)$ is a Riesz basis of $L^2(\Omega)$.
	
	However, this set is not admissible for $\Z$ because for every $j\in\N$, if $\w\in I_j$ then $\La_\w=\{0,j m_j\}$ which are not distinct $(\text{mod }j)$. 
	
\end{example}

\begin{remark}
	A similar argument can be done to extend the previous example to a $k$-tile. If $\{1,a_1,\dots,a_k\}$ are linearly independent numbers over $\Q$, take the multi-tile at level $k$ by translations on $\Z$ set
	\begin{equation*}
	\Omega=[0,k-1)\cup\,\bigcup_{j=1}^{\infty}\,(I_j+n_j), 
	\end{equation*}
	and choose $\{n_j\}_{j\in\N}\subset\N_{\geq k}$ in order to adequately approximate $E_\w$ to an invertible matrix, for almost every $\w\in [0,1)$.
\end{remark}

Hence, a natural question to ask is which sets $\Omega$ support a structured Riesz basis of exponentials. For the {\it bounded} case, it was proved in \cite{AAC} that a set $\Omega\subset\R^d$ which admits a Riesz basis of exponentials $E(H\,;\,a_1,\dots,a_k)$, for some $a_1,\dots,a_k\in\R^d$, must be a $k$-tile of $\R^d$ by translations on $\La$. This result holds true in the case of finite measure sets. 

\begin{theorem}\label{converse RB}
	Let $H\subset\R^d$ be a full lattice and $\La\subset\R^d$ its dual lattice. Given a measurable set of finite measure $\Omega\subset\R^d$, if $L^2(\Omega)$ admits a Riesz basis of the form $E(H\,;\,a_1,\dots,a_k)$
	for some $a_1,\dots,a_k\in\R^d$, then $\Omega$ $k$-tiles $\R^d$ by translations on $\La$. 
\end{theorem}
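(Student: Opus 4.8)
The plan is to run the fiberization machinery backwards, exactly as in the forward direction. Suppose $E(H\,;\,a_1,\dots,a_k)$ is a Riesz basis of $L^2(\Omega)$, and define $\phi_1,\dots,\phi_k$ by $\widehat{\phi}_j = e_{a_j}\chi_\Omega$ as in \eqref{phi_j}. Then $\{\tau_h\phi_j : h\in H,\ j=1,\dots,k\}$ is a Riesz basis of $PW_\Omega$. By the range-function characterization of Riesz bases in shift-invariant spaces (Theorem stated in Section \ref{Pre}), the fibers $\{\cT\phi_j(\w) : j=1,\dots,k\}$ form a Riesz basis of $J_\Omega(\w)$ for almost every $\w\in D$, with uniform constants. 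In particular these $k$ vectors are linearly independent in $\ell^2(\La)$ for a.e.\ $\w$, and they span $J_\Omega(\w)$; hence $\dim J_\Omega(\w) = k$ for almost every $\w\in D$.

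The second step is to translate the dimension count back into a tiling statement. By Proposition \ref{RangeFunc}, $J_\Omega(\w) \simeq \ell^2(\La_\w)$, so $\dim J_\Omega(\w) = \#\La_\w$. Combining with the previous step, $\#\La_\w = k$ for a.e.\ $\w\in D$; but $\#\La_\w = \sum_{\la\in\La}\chi_\Omega(\w+\la)$, which is precisely the statement that $\Omega$ $k$-tiles $\R^d$ by translations on $\La$. I should note that Remark \ref{La_w} guarantees $\La_\w$ is finite for a.e.\ $\w$, so everything is well-defined; and Proposition \ref{RangeFunc} requires only finite measure, not boundedness, which is the whole point of why the argument survives in the unbounded setting.

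The one genuine subtlety — the main obstacle, such as it is — is making sure the Riesz basis property of the \emph{full} system $E(H;a_1,\dots,a_k)$ in $L^2(\Omega)$ really does force the fibers to be a Riesz basis of $J_\Omega(\w)$ and not merely a Riesz sequence inside it. The completeness half is what gives the span; this is exactly the content of the ``moreover'' clause in Theorem \ref{ShiftBFR0} read in the converse direction, together with the observation (already used in the forward direction via \eqref{norma T} and \eqref{norma m}) that the analysis/synthesis operators for $E(H;a_1,\dots,a_k)$ acting on $L^2(\Omega)$ correspond fiberwise to $T_\w$, and $T_\w = E_\w U_\w$ with $U_\w$ unitary, so $T_\w$ and $E_\w$ have the same rank. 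If $E(H;a_1,\dots,a_k)$ were complete but the fibers spanned a proper subspace of $J_\Omega(\w)$ on a positive-measure set, one could produce a nonzero $f\in PW_\Omega$ orthogonal to every $\tau_h\phi_j$, contradicting completeness; so completeness of the exponential system is equivalent to the fibers spanning $J_\Omega(\w)$ a.e. Once that equivalence is in hand, the proof is immediate: Riesz basis of $L^2(\Omega)$ $\Rightarrow$ fibers are a Riesz basis of $J_\Omega(\w)$ a.e.\ $\Rightarrow$ $\dim J_\Omega(\w) = k$ a.e.\ $\Rightarrow$ $\#\La_\w = k$ a.e.\ $\Rightarrow$ $\Omega$ is a $k$-tile.
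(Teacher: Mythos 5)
Your proof is correct and follows essentially the same route as the paper: define $\phi_j$ by $\widehat{\phi}_j = e_{a_j}\chi_\Omega$, use the fiberwise characterization of Riesz bases in shift-invariant spaces to conclude $\dim J_\Omega(\w)=k$ a.e., and then invoke Proposition \ref{RangeFunc} to identify $\dim J_\Omega(\w)$ with $\#\La_\w$. Your additional remarks on why completeness forces the fibers to span $J_\Omega(\w)$ are a sound elaboration of what the paper leaves implicit in its citation of the range-function theorem.
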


\begin{proof}
	Defining the functions $\phi_j$, $j=1,\dots,k$ as in (\ref{phi_j}), we get that $\{\tau_h{\phi}_j:\ h\in H\,,\ j=1,\ldots,k\}$ is a Riesz basis for $PW_\Omega$. This implies that for almost every $\w\in D$, $\{\cT\phi_1(\w),\dots,\cT\phi_k(\w)\}$ is a Riesz basis of $J_\Omega(\w)$, and hence $\dim J_\Omega(\w) = k$ for almost every $\w\in D$. By proposition \ref{RangeFunc}, we conclude that $\Omega$ is a $k$-tile by translations on $\La$.
\end{proof}

The results  obtained so far can be summarized in Figure \ref{k-tilesgraph}. Note that all the inclusions in the picture are proper.

\begin{figure}[H]
	\centering
	\includegraphics{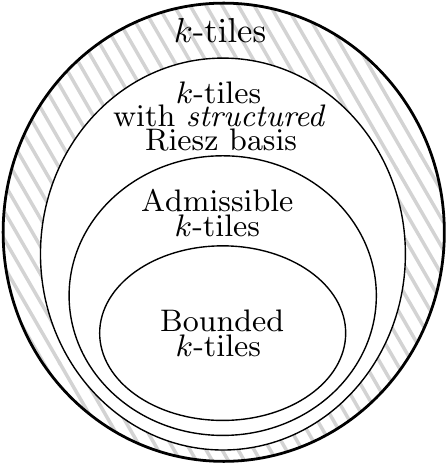}
	\par\vspace{0cm}
	\caption{$k$-tile sets of $\R^d$.}
	\label{k-tilesgraph}
\end{figure}

\section{Submulti-tiles and Frames}\label{Sub}

In this section we turn our attention to frames of exponentials. 
Recall that the system $\{e^{2\pi i \gamma.\w}\,:\,\gamma\in\Gamma\}$ is a {\bf frame} of $L^2(\Omega)$ if it satisfies that
\begin{equation*}
A\|f\|^2\leq \sum_{\gamma\in\Gamma} |\langle f,e^{2\pi i \gamma.\w}\rangle|^2\leq B\|f\|^2,\quad \forall\,f\in L^2(\Omega),
\end{equation*}
for some positive constants $A,B>0$.

It is not difficult to see that any bounded measurable set $\Omega \subset \R^d$ support a frame of exponentials. (This is an easy consequence of these two
facts: $i)$ for any cube $Q$ in $\R^d$ there exists an orthonormal basis of exponentials for $L^2(Q)$. $ii)$ If $\Omega \subseteq Q$, the restriction of an orthonormal basis of exponentials of $L^2(Q)$ to $L^2(\Omega)$ is a frame for the latter.)

Recently S. Nitzan, A. Olevskii and A. Ulanovskii \cite{NOU16} extended the result for any unbounded measurable set of finite measure.
We want to note that the  proof in  \cite{NOU16} used the recently proved Kadison-Singer conjecture and it is not constructive.
The goal of this section is to explore the relationship between unbounded {\it submulti-tiles} and frames and construct concrete  examples of frames of exponentials on unbounded sets.

\begin{definition}\label{k-subtile}
	Let $k$ be a positive integer. We say that a measurable set $\Omega\subset\R^d$ of finite measure, {\bf submulti-tiles $\R^d$ at level $k$} by translations on a lattice $\La$ (or that $\Omega$, {\bf $k$-subtiles $\R^d$}) if for almost every $\w\in D$,
	\begin{equation*}
	\sum_{\la\in \La} \chi_{\Omega} (x+\la) \leq k.\;\;\; {\text{for almost all }} x \in D.
	\end{equation*}
\end{definition}

When $\Omega$ is a $k$-subtile that is admissible, we can no longer claim that $L^2(\Omega)$ has a structured Riesz basis of exponentials, but instead we can see that it supports a structured {\it frame} of exponentials. 
Frames of exponentials are important since they give sets of sampling for the corresponding Paley-Wiener spaces.

The relation between $k$-subtiles and frames of exponentials was first studied in \cite{BHM} for the case when $\Omega$ is a $1$-subtile of finite measure in the context of locally compact abelian groups. Later on, it was proved in \cite{BCHLMM16} that if $\Omega$ is a bounded $k$-subtile, then it admits an structured frame of exponentials. In this section we adapt this last result to the case where $\Omega$ is a finite measure set (not necessarily bounded) with the extra hypothesis of the admissibility. More precisely, we prove the following theorem.

\begin{theorem}\label{frames k-subtiles}
	Let $\Omega\subset\R^d$ be a measurable set such that $0<|\Omega|<+\infty$ and $\La\subset\R^d$ a full lattice. If
	\begin{enumerate}
		\item[(i)]$\Omega$ submulti-tiles $\R^d$ at level $k$ by translations on $\La$,
		\item[(ii)] $\Omega$ is admissible for $\La$,
	\end{enumerate}
	then, there exist $a_1,\dots, a_k \in \R^d$ such that the set $E(H\,;\,a_1,\dots,a_k)$
	is a frame of $L^2(\Omega)$.
\end{theorem}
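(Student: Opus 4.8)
\emph{Proof proposal.}
The plan is to mimic the fiberization argument behind Theorem~\ref{RB for k-tiles}, the only difference being that the auxiliary matrices will be rectangular instead of square, so that ``Riesz basis of $\C^k$'' is replaced by ``frame of $\C^{k_\w}$'' and ``invertible'' by ``full row rank''. As before, I would define $\phi_1,\dots,\phi_k\in PW_\Omega$ by their Fourier transforms as in \eqref{phi_j}, so that $E(H;a_1,\dots,a_k)$ is a frame of $L^2(\Omega)$ if and only if $\Phi_H=\{\tau_h\phi_j:h\in H,\ j=1,\dots,k\}$ is a frame of $PW_\Omega$. By the characterization of frames of a shift-invariant space in terms of its fibers, together with Proposition~\ref{RangeFunc} which gives $J_\Omega(\w)\simeq\ell^2(\La_\w)$, the latter holds, with the same constants, if and only if for almost every $\w\in D$ the vectors $\{\cT\phi_j(\w):j=1,\dots,k\}$ form a frame of $J_\Omega(\w)$ with uniform constants $A,B>0$.

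Writing $\La_\w=\{\la_1,\dots,\la_{k_\w}\}$ with $k_\w=\#\La_\w\le k$ for almost every $\w$ --- this is the one place the $k$-subtiling hypothesis enters --- the fiber $\cT\phi_j(\w)=\{\widehat{\phi}_j(\w+\la)\}_{\la\in\La}$ is supported on $\La_\w$, and under the identification $J_\Omega(\w)\simeq\C^{k_\w}$ of Proposition~\ref{RangeFunc} it equals $e_{a_j}(\w)$ times the $j$-th column of the $k_\w\times k$ matrix $E_\w=\big(e^{2\pi i\,a_j\cdot\la_l}\big)_{1\le l\le k_\w,\ 1\le j\le k}$. Exactly as in the proof of Theorem~\ref{RB for k-tiles} one factors $T_\w=E_\w U_\w$ with $U_\w=\mathrm{diag}\big(e_{a_1}(\w),\dots,e_{a_k}(\w)\big)$ unitary; the $j$-th column of $T_\w$ is then $\cT\phi_j(\w)$, and since $\|T_\w^*x\|=\|E_\w^*x\|$ for every $x$, the fiber system $\{\cT\phi_j(\w)\}_j$ is a frame of $\C^{k_\w}$ with constants $A,B$ precisely when $A\|x\|^2\le\|E_\w^*x\|^2\le B\|x\|^2$ for all $x\in\C^{k_\w}$, i.e.\ precisely when $E_\w$ has full row rank with its nonzero singular values bounded away from $0$ and from $\infty$, uniformly in $\w$.

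To obtain this I would use admissibility exactly as in Theorem~\ref{RB for k-tiles}: choose $v\in H$ and $n\in\N$ with $\{\langle v,\la\rangle:\la\in\La_\w\}$ distinct $(\text{mod }n)$ for almost every $\w$, and set $a_j:=\frac{j-1}{n}v$. Then $E_\w=\big(e^{2\pi i(j-1)\langle v,\la_l\rangle/n}\big)_{l,j}$ is, up to a permutation of its rows, the submatrix of the Fourier matrix $\mathcal F_n$ obtained from the $k_\w$ distinct rows indexed by $\langle v,\la_l\rangle\,(\text{mod }n)$ and the first $k$ columns $0,1,\dots,k-1$. Its first $k_\w$ columns already form a $k_\w\times k_\w$ Vandermonde matrix with the distinct nodes $e^{2\pi i\langle v,\la_l\rangle/n}$, hence an invertible matrix; therefore $E_\w$ has rank $k_\w$, its columns span $\C^{k_\w}$ and form a frame of it. Finally, $E_\w$ ranges only over the finite family of matrices obtained by choosing at most $k$ distinct rows and the first $k$ columns of $\mathcal F_n$, each of full row rank, so taking $A$ to be the smallest squared nonzero singular value and $B$ the largest squared operator norm over this finite family yields the required uniform bounds and completes the argument.

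I do not expect a serious obstacle: this is the proof of Theorem~\ref{RB for k-tiles} with two minor adjustments. The matrices $E_\w$ are now rectangular, so one needs surjectivity rather than invertibility; and one must make sure the fibers span all of $J_\Omega(\w)$ --- not just sit inside it --- so that $\Phi_H$ is a frame of the whole of $PW_\Omega$ (equivalently $E(H;a_1,\dots,a_k)$ a frame of all of $L^2(\Omega)$) and not of a proper subspace. Both points are settled at once by the Vandermonde block in the first $k_\w$ columns of $E_\w$. The degenerate values $k_\w\in\{0,1\}$, which genuinely occur (e.g.\ when $\Omega$ is a $1$-subtile), are trivially covered: an empty fiber gives the vacuous condition on $\{0\}$, while a single node gives a $1\times k$ row of equal unimodular entries, which is a frame of $\C$.
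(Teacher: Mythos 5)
Your proof is correct, but it follows a genuinely different route from the paper. You argue directly on the fibers: the matrices $T_\w=E_\w U_\w$ become rectangular of size $k_\w\times k$ with $k_\w=\#\La_\w\le k$, the Riesz-basis condition $A\|x\|^2\le\|T_\w x\|^2\le B\|x\|^2$ is replaced by the frame condition $A\|x\|^2\le\|T_\w^*x\|^2=\|E_\w^*x\|^2\le B\|x\|^2$ on $\C^{k_\w}$, and admissibility guarantees via the Vandermonde block in the first $k_\w$ columns that each $E_\w$ has full row rank, with uniform bounds because only finitely many such matrices occur. The paper instead proves first a structural embedding result (Proposition \ref{k-tile embed}): every admissible $k$-subtile $\Omega$ is contained in an admissible $k$-tile $\Delta$ of finite measure, built by decomposing $D$ into the measurable pieces $D_R=\{\w:\La_\w=R\}$ and completing each $R$ to a set of $k$ residues mod $n$; Theorem \ref{RB for k-tiles} then gives a structured Riesz basis of $L^2(\Delta)$, whose restriction to $\Omega$ is the desired frame. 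Your approach is more self-contained, avoids constructing $\Delta$, and makes the frame bounds explicit as extremal singular values of submatrices of $\mathcal F_n$; the paper's approach buys the embedding proposition as a result of independent interest and reuses Theorem \ref{RB for k-tiles} as a black box. You correctly flag the two points that need care in the direct argument --- that the fibers must span all of $J_\Omega(\w)$ so the system is a frame of all of $PW_\Omega$ rather than of a proper shift-invariant subspace, and the degenerate fibers with $k_\w\in\{0,1\}$ --- and both are indeed settled by the invertible Vandermonde block. One cosmetic imprecision: when $k>n$ the columns of $E_\w$ are not literally the first $k$ columns of $\mathcal F_n$ (indices repeat mod $n$), but the family of possible matrices is still finite and the full-row-rank argument is unaffected.
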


The strategy of the proof in \cite{BCHLMM16}, consists in, given a bounded $k$-subtile $\Omega$, enlarge it to obtain
a $k$-tile $\Delta$, and then select a structured Riesz basis of $L^2(\Delta),$ (that always exists in the bounded case for $k$-tiles). This basis, when restricted to $\Omega$ is a structured frame for $L^2(\Omega).$

In our case, since the $k$-subtile $\Omega$ is not necessarily bounded, we need to enlarge it to an admissible $k$-tile, to guarantee the existence of the Riesz basis. This requires an adaptation of the proof in \cite{BCHLMM16}. This is done in the next proposition:

\begin{proposition}\label{k-tile embed}
	Let $\Omega$ be a measurable set of finite measure  that $k$-subtiles $\R^d$ and is admissible for a lattice $\La\subset\R^d$. Then, there exists a measurable set $\Delta$, of finite measure, which is a $k$-tile of $\R^d$ and admissible for $\La$ such that $\Omega\subset\Delta$.
\end{proposition}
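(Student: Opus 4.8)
The plan is to build $\Delta$ by filling in the "missing layers" of the $k$-subtile $\Omega$, while keeping track of the admissibility colors so that the filled-in pieces never collide with existing ones modulo $n$. Fix a vector $v\in H$ and $n\in\N$ witnessing that $\Omega$ is $(n,v)$-admissible for $\La$. For almost every $\w\in D$ set $m(\w):=\#\La_\w(\Omega)\le k$; we must add $k-m(\w)$ new translates $\w+\la$ to $\Delta$, choosing $\la\in\La$ with $\langle v,\la\rangle$ in a residue class $(\mathrm{mod}\ n)$ not already used by the elements of $\La_\w(\Omega)$. Since $\#\La_\w(\Omega)=m(\w)\le k$ and we need at most $k-m(\w)$ fresh residues, a necessary bookkeeping point is that $n$ be large enough to furnish $k$ distinct residues at all; if the given $n<k$ one first replaces $n$ by a larger modulus (e.g.\ any multiple $N$ of $n$ with $N\ge k$, or the least common multiple with $k$), noting that $(n,v)$-admissibility of $\Omega$ trivially upgrades to $(N,v)$-admissibility. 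So without loss of generality assume $n\ge k$.

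The key step is to make the choice of the new lattice points $\La$-periodic and measurable in $\w$, so that $\Delta$ is a genuine measurable set. Partition $D$ into the measurable sets $D_j:=\{\w\in D: \#\La_\w(\Omega)=j\}$, $j=0,1,\dots,k$, up to a null set. On each $D_j$ we must adjoin $k-j$ new layers. I would do this by choosing, once and for all, a finite list $\lambda^{(1)},\dots,\lambda^{(k)}\in\La$ whose values $\langle v,\lambda^{(i)}\rangle$ are pairwise distinct $(\mathrm{mod}\ n)$ — for instance $\lambda^{(i)}:=(i-1)u$ for a suitable $u\in\La$ with $\langle v,u\rangle$ of additive order $\ge k$ in $\Z_n$, which exists because $\langle v,\cdot\rangle\colon\La\to\Z$ is surjective onto a subgroup of $\Z$ containing $1$ (as $v\in H$ and $\La$ is a full lattice, $\langle v,\La\rangle$ is a nonzero subgroup of $\Z$, hence of the form $q\Z$; choosing $v$ primitive in $H$ one gets $q=1$, and in general one absorbs $q$ into $n$). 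For $\w\in D_j$, look at the $j$ residues $r_1(\w),\dots,r_j(\w)$ already occupied by $\La_\w(\Omega)$, and add to $\La$ the $k-j$ points $\lambda^{(i)}$ for which $\langle v,\lambda^{(i)}\rangle \pmod n$ is \emph{not} among $r_1(\w),\dots,r_j(\w)$, ordered by index; since the $\langle v,\lambda^{(i)}\rangle$ are $k$ distinct residues and at most $j$ are forbidden, at least $k-j$ survive, and one keeps exactly the first $k-j$ of them. The functions $\w\mapsto r_\ell(\w)$ are measurable (they are $\langle v,\cdot\rangle$ applied to the measurable selections $\la_\ell(\w)$ coming from Proposition~\ref{RangeFunc}/Theorem~\ref{ShiftBFR0}), so the resulting set
$$
\Delta:=\Omega\ \cup\ \bigcup_{j=0}^{k}\ \bigcup_{\w\in D_j}\ \bigl(\w+\{\text{chosen }\lambda^{(i)}\}\bigr)
$$
is measurable, and by construction $\#\La_\w(\Delta)=k$ for a.e.\ $\w\in D$, i.e.\ $\Delta$ is a $k$-tile. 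Finiteness of $|\Delta|$ is immediate: $|\Delta|\le |\Omega|+(k)\,|D|<\infty$ since we add at most $k$ translates of (pieces of) $D$.

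Finally one checks admissibility of $\Delta$ for $\La$ with the \emph{same} $v$ and $n$: by construction, for a.e.\ $\w\in D$ the set $\{\langle v,\la\rangle : \la\in\La_\w(\Delta)\}$ consists of the $j$ old residues $r_1(\w),\dots,r_j(\w)$ together with $k-j$ newly chosen residues that were explicitly required to avoid the old ones and to be mutually distinct (they are distinct residues among the $\langle v,\lambda^{(i)}\rangle$), so all $k$ of them are distinct $(\mathrm{mod}\ n)$. Hence $\Delta$ is $(n,v)$-admissible for $\La$, $\Omega\subset\Delta$, and the proposition follows. The main obstacle is the measurability/periodicity of the selection — ensuring the "which layers to add" rule depends measurably on $\w$ and is constant on $\La$-orbits; this is handled by freezing the finite candidate list $\{\lambda^{(i)}\}$ in advance and choosing among them by a deterministic rule driven by the measurable residue-valued selections $r_\ell(\w)$, rather than making independent choices fiber by fiber. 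The only arithmetic subtlety, namely guaranteeing $n\ge k$ and that $\langle v,\La\rangle$ is all of $\Z$ (or enlarging $n$ to compensate), is routine bookkeeping as indicated above.
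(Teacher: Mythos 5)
Your proposal is correct and follows essentially the same route as the paper: you partition $D$ into measurable pieces according to the configuration of residue classes $\langle v,\la\rangle \pmod n$ occupied by $\La_\w(\Omega)$ and adjoin, constantly on each piece, lattice translates lying in unoccupied classes until every fiber has exactly $k$ points --- which is precisely the paper's decomposition $\Omega=\bigcup_{R}D_R+R$ completed to $\Delta=\bigcup_R D_R+(R\cup R')$, with your fixed candidate list $\lambda^{(1)},\dots,\lambda^{(k)}$ playing the role of the sets $R'$. Your explicit bookkeeping that one may assume $n\ge k$ and that $\langle v,\La\rangle$ reaches at least $k$ residues mod $n$ (by passing to a multiple of $n$) is a point the paper leaves implicit but which is indeed needed for the complementary sets $R'$ to exist.
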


\begin{proof}
	We start by giving a characterization of sets that $k$-subtile $\R^d$ and are admissibles.
	Let $\La$ be a full lattice in $\R^d,$  $v $ a non-zero vector in the dual lattice $H$ and $n$ a natural number.
	Consider the sub-lattice of $\La$ defined by 
	$$\La^{(0)}:=\{\la\in\La: \; \langle v, \la \rangle \equiv 0 \;(\text{mod }n)\},$$
	and let $\La^{(r)}$, $r=0,\dots,n-1$, be the different cosets of the quotient $\La/\La^{(0)}.$ 
	Let $k \geq 1$ be an integer and $\Omega \subset \R^d$  a $k$-subtile that is $(n,v)-$admissible for $\La.$
	
	Define 
	$$\RR:= \{R \subset \La: \#R\leq k \text{ and } \la-\la' \notin \La^{(0)} \text{ if } \la,\la' \in R, \; \la\neq \la'\}.$$
	The properties imposed on $\Omega$ imply that $\La_{\omega}\in \RR$ for almost every $\omega \in D.$
	
	Now, for $R \in \RR$ set $D_R:= \{\omega \in D: \La_{\omega}=R\}.$
	(Note that if $R \neq R'$, then $D_R \cap D_{R'} = \emptyset$ and that  $D_R$ could be empty for some $R\in \RR$).
	
	We have $D_R + R \subseteq \Omega$ and we obtain (up to measure zero) the decomposition:
	\begin{equation}\label{decomsubomega}
	\Omega =\bigcup_{R\in\RR} D_R +R.
	\end{equation}
	
	We will see now that the sets  $D_R$ are measurables. Consider the functions 
	$$\psi_r(\omega) = \sum_{\la\in \La^{(r)}} \chi_{\Omega} (\omega+\la),\;\;  \omega \in D, \;\; r=0,\dots, n-1,$$
	and let
	$[R]:=\big \{r \in \{0,\dots, n-1\}: r \equiv \langle v, \la \rangle\; (\text{mod }n), \text{ for some  } \la \in R \big \}.$
	
	Thus, $$D_R = \bigcap_{r \in  [R]}  \psi_r^{-1}(1) \; \cap\; \bigcap_{r \notin  [R]} \psi_r^{-1}(0),  $$
	which is an intersection of measurable sets. 
	
	Conversely, for each partition $\big \{D_R: R \in \RR \big \}$ of $D$, in measurable sets (we allow here some of the partition elements
	to have measure zero), the set $\Omega$ defined by \eqref{decomsubomega}, necessarily $k$-subtiles $\R^d$ and is $(n,v)-$admissible for $\La$.
	
	Now that we obtained the desired decomposition,  the proposition follows defining
	\begin{equation*}\label{decomsub}
	\Delta =\bigcup_{R\in\RR} D_R + (R \cup R').
	\end{equation*}
	where for each $R \in \RR$ we have chosen  a set $R' \subseteq  \La$ complementary to $R$, in the sense that
	$[R] \cap [R'] = \emptyset$ and $\# \big( [R] \cup [R'] \big)= k$.
	
\end{proof}

We are now ready to prove Theorem \ref{frames k-subtiles}.

\begin{proof}(of Theorem \ref{frames k-subtiles})
	By Proposition \ref{k-tile embed}, there exists a measurable set of finite measure $\Delta$, such that $k$-tiles $\R^d$ and is admissible for $\La$, which contains $\Omega$. Then, by Theorem \ref{RB for k-tiles} we know that there exist vectors $a_1,\dots,a_k\in\R^d$ such that $E(H\,;\,a_1,\dots,a_k)$ is a Riesz basis of $L^2(\Delta)$. Hence, $E(H\,;\,a_1,\dots,a_k)$ is a frame of $L^2(\Omega)$.
\end{proof}

As we saw in the previous section, Theorem \ref{converse RB} states that if $\Omega$ supports a Riesz basis of exponentials $E(H\,;\,a_1,\dots,a_k)$, for some $a_1,\dots,a_k\in\R^d$, then it must $k$-tile $\R^d$ by translations on $\La$. When $E(H\,;\,a_1,\dots,a_k)$ is a frame instead, a similar result can be proved.

\begin{theorem}
	Let $H\subset\R^d$ be a full lattice and $\La\subset\R^d$ its dual lattice. Given a measurable set of finite measure $\Omega\subset\R^d$, if $L^2(\Omega)$ admits a frame of the form $E(H\,;\,a_1,\dots,a_k)$
	for some $a_1,\dots,a_k\in\R^d$, then there exists $\ell\leq k$, such that $\Omega$ $\ell$-subtiles $\R^d$ by translations on $\La$. 
\end{theorem}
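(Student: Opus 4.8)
The statement is the frame analogue of Theorem \ref{converse RB}, so the plan is to run the same fiberization argument and track what changes when "Riesz basis" is weakened to "frame". First I would define $\phi_1,\dots,\phi_k$ by $\widehat{\phi}_j := e_{a_j}\,\sub{\chi}{\Omega}$ as in \eqref{phi_j}, so that $\{\tau_h\phi_j : h\in H,\ j=1,\dots,k\}$ is a frame of $PW_\Omega$ precisely because $E(H\,;\,a_1,\dots,a_k)$ is a frame of $L^2(\Omega)$ (the periodization computation leading to \eqref{norma T} is insensitive to whether the system is a basis or just a frame, since it only rewrites $\|P\|^2_{L^2(\Omega)}$). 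Then by part (i) of the range-function characterization of frames, $\{\cT\phi_1(\w),\dots,\cT\phi_k(\w)\}$ is a frame of $J_\Omega(\w)$ with uniform constants $A,B>0$ for almost every $\w\in D$.

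From there the key observation is that a set of $k$ vectors can only be a frame of a space of dimension at most $k$, so $\dim J_\Omega(\w)\leq k$ for almost every $\w\in D$; by Proposition \ref{RangeFunc}, $\dim J_\Omega(\w) = \#\La_\w$, hence $\#\La_\w = \sum_{\la\in\La}\chi_\Omega(\w+\la)\leq k$ for almost every $\w\in D$, which is exactly the $k$-subtiling condition of Definition \ref{k-subtile}. To get the sharper conclusion with $\ell\leq k$, I would let $m(\w) := \dim J_\Omega(\w)\in\{0,1,\dots,k\}$ and set $\ell := \operatorname*{ess\,sup}_{\w\in D} m(\w)$; I expect one still only gets "$\Omega$ $\ell$-subtiles $\R^d$" (i.e. $m(\w)\leq\ell$ a.e., with $m(\w)=\ell$ on a positive-measure set), which is all the statement asks, rather than $m$ being a.e. constant — a genuinely unbounded $k$-subtile need not have constant fiber dimension, so $\ell$ is the smallest level for which the inequality $\sum_\la \chi_\Omega(\cdot+\la)\leq \ell$ holds a.e.

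The only subtle point is verifying that the uniform frame bounds really do persist fiberwise and that the essential supremum $\ell$ is achieved by a positive-measure set of fibers (otherwise one could shrink $\ell$ further, contradicting its definition as an essential supremum); both follow from standard measure-theoretic manipulations with range functions, as in the proof of Theorem \ref{ShiftBFR0} in \cite{AAC}. I do not expect a real obstacle here: unlike the Riesz-basis converse, no invertibility of the matrices $T_\w$ is needed, only the dimension count, so the argument is if anything easier than that of Theorem \ref{converse RB}. The one place to be careful is that the frame inequality on $L^2(\Omega)$ transfers to a frame inequality on $PW_\Omega$ only after the Fourier-transform identification, which is an isometry, so the constants $A,B$ are preserved and the fiberwise frame property of $\{\cT\phi_j(\w)\}_j$ holds with the same $A,B$ for a.e.\ $\w$.
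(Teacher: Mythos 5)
Your proposal is correct and follows essentially the same route as the paper: define the $\phi_j$ via \eqref{phi_j}, pass to fibers so that $\{\cT\phi_1(\w),\dots,\cT\phi_k(\w)\}$ is a frame of $J_\Omega(\w)$ a.e., conclude $\dim J_\Omega(\w)=\#\La_\w\leq k$ from Proposition \ref{RangeFunc}, and take $\ell$ to be the essential supremum of $\sum_{\la\in\La}\chi_\Omega(\w+\la)$. The additional care you devote to whether the essential supremum is attained is not needed, since the $\ell$-subtile condition only requires the a.e.\ upper bound.
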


\begin{proof}
	Proceeding analogously as in Theorem \ref{converse RB}, we see that $\{\tau_h{\phi}_j:\ h\in H\,,\ j=1,\ldots,k\}$ is a frame for $PW_\Omega$. Which implies that $\{\cT\phi_1(\w),\dots,\cT\phi_k(\w)\}$ is a frame of $J_\Omega(\w)$ for almost every $\w\in D$, and thus $\dim (J_\Omega(\w))\leq k$ for almost every $\w\in D$. By Proposition \ref{RangeFunc}, we get that $\# \La_w\leq k$. Hence, if we take
	\begin{equation*}
	\ell:=\underset{\w\in D}{\text{sup ess}} \sum_{\la\in\La} \chi_{\Omega} (\w+\la),
	\end{equation*}
	$\Omega$ is an $\ell$-subtile of $\R^d$ by translations on $\La$.
\end{proof}


\end{document}